\newtheorem{theorem}{Theorem}[section]
\newtheorem{proposition}[theorem]{Proposition}
\newtheorem{lemma}[theorem]{Lemma}
\newtheorem{remark}[theorem]{Remark}
\def\cH{\mathcal{H}}
\def\bC{\mathbb{C}}
\def\bR{\mathbb{R}}
\def\bN{\mathbb{N}}
\def\eps{\varepsilon}
\def\<{\langle}
\def\>{\rangle}
\def\ffi{\varphi}
\begin{document}
\baselineskip=16pt

\ \vskip 1cm
\centerline{\bf\LARGE Operator log-convex functions}
\medskip
\centerline{\bf\LARGE and operator means}

\bigskip
\bigskip
\centerline{\Large
Tsuyoshi Ando\footnote{E-mail: ando@es.hokudai.ac.jp}
and Fumio Hiai\footnote{E-mail: hiai@math.is.tohoku.ac.jp}}

\medskip
\begin{center}
$^{1}$\,Shiroishi-ku, Hongo-dori 9, Minami 4-10-805, Sapporo 003-0024, Japan
\end{center}
\begin{center}
$^{2}$\,Graduate School of Information Sciences, Tohoku University, \\
Aoba-ku, Sendai 980-8579, Japan
\end{center}

\medskip
\begin{abstract}
We study operator log-convex functions on $(0,\infty)$, and prove that a continuous
nonnegative function on $(0,\infty)$ is operator log-convex if and only if it is operator
monotone decreasing. Several equivalent conditions related to operator means are given
for such functions. Operator log-concave functions are also discussed.

\bigskip\noindent
{\it AMS classification:}
47A63, 47A64, 15A45

\medskip\noindent
{\it Keywords:}
operator monotone function, operator convex function, operator log-convex function,
operator mean, arithmetic mean, geometric mean, harmonic mean
\end{abstract}

\section*{Introduction}

In 1930's the theory of matrix/operator monotone functions was initiated by
L\"owner \cite{L}, soon followed by the theory of matrix/operator convex functions due to
Kraus \cite{K}. Nearly half a century later, a modern treatment of operator monotone and
convex functions was established by a seminal paper \cite{HP} of Hansen and Pedersen.
Comprehensive expositions on the subject are found in \cite{Do,An,Bh} for example.

Our first motivation to the present paper is the question to determine $\alpha\in\bR$ for
which the functional $\log\omega(A^\alpha)$ is convex in positive operators $A$ for any
positive linear functional $\omega$. In the course of settling the question, we arrived
at the idea to characterize continuous nonnegative functions $f$ on $(0,\infty)$ for
which the operator inequality $f(A\,\triangledown\,B)\le f(A)\,\#\,f(B)$ holds for
positive operators $A$ and $B$, where $A\,\triangledown\,B:=(A+B)/2$ is the arithmetic
mean and $A\,\#\,B$ is the geometric mean \cite{PW,An}. This inequality was indeed
considered by Aujla, Rawla and Vasudeva \cite{ARV} as a matrix/operator version of
log-convex functions. In fact, a function $f$ satisfying the above inequality may be said
to be operator log-convex because the numerical inequality
$f\bigl((a+b)/2)\le\sqrt{f(a)f(b)}$ for $a,b>0$ means the convexity of $\log f$ and the
geometric mean $\#$ is the most standard operator version of geometric mean. Moreover, it
is worth noting that some matrix eigenvalue inequalities involving log-convex functions
were shown in \cite{AB}.

In this paper we show that a continuous nonnegative function $f$ on $(0,\infty)$ is
operator log-convex if and only if it is operator monotone decreasing, and furthermore
present several equivalent conditions related to operator means for the operator
log-convexity. The operator log-concavity counterpart is also considered, and we show that
$f$ is operator log-concave, i.e., $f$ satisfies $f(A\triangledown B)\ge f(A)\,\#\,f(B)$
for positive operators $A,B$ if and only if it is operator monotone (or equivalently,
operator concave).

The paper is organized as follows. In Section 1, after preliminaries on basic notions, the
convexity of $\log\omega(f(A))$ in positive operators $A$ is proved when $f$ is operator
monotone decreasing on $(0,\infty)$. Sections 2 and 3 are the main parts of the paper,
where a number of equivalent conditions are provided for a continuous nonnegative
function on $(0,\infty)$ to be operator log-convex (equivalently, operator monotone
decreasing), or to be operator log-concave (equivalently, operator monotone). In Section 4
another characterization in terms of operator means is provided for a function on
$(0,\infty)$ to be operator monotone.

\section{Operator log-convex functions: motivation}
\setcounter{equation}{0}

In this paper we consider operator monotone and convex functions defined on the half real
line $(0,\infty)$. Let $\cH$ be an infinite-dimensional (separable) Hilbert space. Let
$B(\cH)^+$ denote the set of all positive operators in $B(\cH)$, and $B(\cH)^{++}$ the set
of all invertible $A\in B(\cH)^+$. A continuous real function $f$ on $(0,\infty)$ is said
to be {\it operator monotone} (more precisely, {\it operator monotone increasing}) if
$A\ge B$ implies $f(A)\ge f(B)$ for $A,B\in B(\cH)^{++}$, and {\it operator monotone
decreasing} if $-f$ is operator monotone or $A\ge B$ implies $f(A)\le f(B)$, where $f(A)$
and $f(B)$ are defined via functional calculus as usual. Also, $f$ is said to be
{\it operator convex} if $f(\lambda A+(1-\lambda)B)\le\lambda f(A)+(1-\lambda)f(B)$ for
all $A,B\in B(\cH)^{++}$ and $\lambda\in(0,1)$, and {\it operator concave} if $-f$ is
operator convex. In fact, as easily seen from continuity, the mid-point operator convexity
(when $\lambda=1/2$) is enough for $f$ to be operator convex.

As well known (see \cite[Examples III.2]{An}, \cite[Chapter V]{Bh} for example), a power
function $x^\alpha$ on $(0,\infty)$ is operator monotone (equivalently, operator concave)
if and only if $\alpha\in[0,1]$, operator monotone decreasing if and only if
$\alpha\in[-1,0]$, and operator convex if and only if $\alpha\in[-1,0]\cup[1,2]$.

An axiomatic theory on operator means for operators in $B(\cH)^+$ was developed by Kubo
and Ando \cite{KA} related to operator monotone functions. Corresponding to each
nonnegative operator monotone function $h$ on $[0,\infty)$ with $h(1)=1$ the
{\it operator mean} $\sigma=\sigma_h$ is introduced by
$$
A\,\sigma\,B:=A^{1/2}h(A^{-1/2}BA^{-1/2})A^{1/2},\qquad A,B\in B(\cH)^{++},
$$
which is further extended to $A,B\in B(\cH)^+$ as
\begin{equation}\label{F-1.1}
A\,\sigma\,B:=\lim_{\eps\searrow0}(A+\eps I)\,\sigma\,(B+\eps I)
\end{equation}
in the strong operator topology, where $I$ is the identity operator on $\cH$. The function
$h$ is conversely determined by $\sigma$ as $h(x)=1\,\sigma\,x$ (more precisely,
$h(x)I=I\,\sigma\,xI$) for $x>0$. The following property of operator means is useful:
$$
X^*(A\,\sigma\,B)X=(X^*AX)\,\sigma\,(X^*BX)
$$
for all invertible $X\in B(\cH)$ \cite{KA}.

The most familiar operator means are
\begin{align*}
A\,\triangledown\,B&:={A+B\over2}\quad\mbox{({\it arithmetic mean})}, \\
A\,\#\,B&:=A^{1/2}(A^{-1/2}BA^{-1/2})^{1/2}A^{1/2}\quad\mbox{({\it geometric mean})}, \\
A\,!\,B&:=\biggl({A^{-1}+B^{-1}\over2}\biggr)^{-1}=2(A:B)
\quad\mbox{({\it harmonic mean})}
\end{align*}
for $A,B\in B(\cH)^{++}$ (also for $A,B\in B(\cH)^+$ via \eqref{F-1.1}), where $A:B$ is
the so-called {\it parallel sum}, that is, $A:B:=(A^{-1}+B^{-1})^{-1}$. The geometric mean
was first introduced by Pusz and Woronowicz \cite{PW} in a more general setting for
positive forms. Basic properties of the geometric and the harmonic means for operators are
found in \cite{An}. Note that the operator version of the
{\it arithmetic-geometric-harmonic mean inequality} holds:
$$
A\,\triangledown\,B\ge A\,\#\,B\ge A\,!\,B.
$$

The original motivation to discuss an operator version of log-convex functions came from
the question whether the functional
$$
A\in B(\cH)^{++}\mapsto\log\omega(A^\alpha)
$$
is convex for any $\alpha\in[-1,0]$ and for any positive linear functional $\omega$ on
$B(\cH)$. This is settled by the following:

\begin{proposition}\label{P-1.1}
Let $f$ be a nonnegative operator monotone decreasing function on $(0,\infty)$, and
$\omega$ be a positive linear functional on $B(\cH)$. Then the functional
$$
A\in B(\cH)^{++}\mapsto\log\omega(f(A))\in[-\infty,\infty)
$$
is convex.
\end{proposition}

\begin{proof}
The first part of the proof below is same as the proof of \cite[Proposition 2.1]{ARV}
while we include it for the convenience of the reader. If $f(x)=0$ for some
$x\in(0,\infty)$, then $f$ is identically zero due to analyticity of $f$ (see
\cite[V.4.7]{Bh}) and the conclusion follows trivially. So we assume that $f(x)>0$ for all
$x\in(0,\infty)$. Since $1/f$ is positive and operator monotone on $(0,\infty)$, it
follows (see \cite[Theorem 2.5]{HP}, \cite[V.2.5]{Bh}) that $1/f$ is operator concave on
$(0,\infty)$. Hence
$$
f(A\,\triangledown\,B)^{-1}\ge f(A)^{-1}\,\triangledown\,f(B)^{-1}
$$
so that
\begin{equation}\label{F-1.2}
f(A\,\triangledown\,B)\le f(A)\,!\,f(B),\qquad A,B\in B(\cH)^{++}.
\end{equation}
For each $\lambda>0$, since
$$
f(A)\,!\,f(B)\le f(A)\,\#\,f(B)=(\lambda f(A))\,\#\,(\lambda^{-1}f(B))
\le{\lambda f(A)+\lambda^{-1}f(B)\over2},
$$
we have
$$
\omega(f(A\,\triangledown\,B))\le{\lambda\omega(f(A))+\lambda^{-1}\omega(f(B))\over2},
\qquad A,B\in B(\cH)^{++}.
$$
Minimizing the above right-hand side over $\lambda>0$ yields that
$$
\omega(f(A\,\triangledown B))\le\sqrt{\omega(f(A))\omega(f(B))},
$$
and hence
$$
\log\omega(f(A\,\triangledown B))\le{\log\omega(f(A))+\log\omega(f(B))\over2}.
$$
Since $A\in B(\cH)^{++}\mapsto\log\omega(f(A))\in[-\infty,\infty)$ is continuous in the
operator norm, the convexity follows from the mid-point convexity.
\end{proof}

In the following we state, for convenience, the concave counterpart of Proposition
\ref{P-1.1}. This is immediately seen from the operator concavity of $f$ and the concavity
of $\log x$.

\begin{proposition}\label{P-1.2}
Let $f$ be a nonnegative operator monotone function on $(0,\infty)$, and $\omega$ be a
positive linear functional on $B(\cH)$. Then the functional
$A\in B(\cH)^{++}\mapsto\log\omega(f(A))$ is concave.
\end{proposition}

Let $f$ be a continuous nonnegative function on $(0,\infty)$. An essential point in
the proof of Proposition \ref{P-1.1} is the following operator inequality considered in
\cite{ARV}:
\begin{equation}\label{F-1.3}
f(A\,\triangledown\,B)\le f(A)\,\#\,f(B),\qquad A,B\in B(\cH)^{++}.
\end{equation}
When $f$ satisfies \eqref{F-1.3}, we say that $f$ is {\it operator log-convex}. The term
seems natural because the numerical inequality $f\bigl((a+b)/2)\le\sqrt{f(a)f(b)}$,
$a,b>0$, means the convexity of $\log f$. On the other hand, it is said that $f$ is
{\it operator log-concave} if it satisfies
$$
f(A\,\triangledown\,B)\ge f(A)\,\#\,f(B),\qquad A,B\in B(\cH)^{++}.
$$
Indeed, another operator inequality
\begin{equation}\label{F-1.4}
\log f(A\,\triangledown\,B)\le\{\log f(A)\}\,\triangledown\,\{\log f(B)\},
\qquad A,B\in B(\cH)^{++},
\end{equation}
was also considered in \cite{ARV} for a continuous function $f>0$ on $(0,\infty)$, where
the term ``log matrix convex functions" was referred to \eqref{F-1.4} while
``multiplicatively matrix convex functions" to \eqref{F-1.3}. But we prefer to use
operator log-convexity for \eqref{F-1.3} and we say simply that $\log f$ is operator
convex if $f$ satisfies \eqref{F-1.4} (see Remark \ref{R-3.4} in Section 3 in this
connection).

In the rest of the paper we will prove:
\begin{itemize}
\item[($1^\circ$)] $f$ is operator monotone decreasing if and only if $f$ is operator
log-convex,
\item[($2^\circ$)] $f$ is operator monotone (increasing) if and only if $f$ is operator
log-concave.
\end{itemize}
We will indeed prove results much sharper than ($1^\circ$) and ($2^\circ$), and moreover
present several conditions which are equivalent to those in ($1^\circ$) and ($2^\circ$),
respectively.

\section{Operator monotony, operator log-convexity, and operator means}
\setcounter{equation}{0}

When $f$ is a continuous nonnegative function on $(0,\infty)$, the operator convexity
of $f$ is expressed as
\begin{equation}\label{F-2.1}
f(A\,\triangledown\,B)\le f(A)\,\triangledown\,f(B),\qquad A,B\in B(\cH)^{++}.
\end{equation}
Recall that an operator mean $\sigma$ is said to be {\it symmetric} if
$A\,\sigma\,B=B\,\sigma\,A$ for all $A,B \in B({\cH})^{++}$. Note that the arithmetic mean
$\triangledown$ and the harmonic mean $!$ are the maximum and the minimum symmetric means,
respectively:
\begin{equation}\label{F-2.2}
A\,\triangledown\,B \ge A\,\sigma\,B \ge A\,!\,B,\qquad A,B\in B(\cH)^{++},
\end{equation}
for every symmetric operator mean $\sigma$, or equivalently,
\begin{equation}\label{F-2.3}
{x+1\over2}\ge h(x)\ge{2x\over x+1},\qquad x\ge0,
\end{equation}
for every nonnegative operator monotone function $h$ on $[0,\infty)$ satisfying $h(1)=1$
and the symmetry condition $h(x)=xh(x^{-1})$ for $x>0$ \cite{KA}.

The next theorem characterizes the class of functions $f$ that satisfy the variant of
\eqref{F-2.1} where $\triangledown$ in the right-hand side is replaced with a different
symmetric operator mean. The statement ($1^\circ$) in Section 1 is included in the theorem.

\begin{theorem}\label{T-2.1}
Let $f$ be a continuous nonnegative function on $(0,\infty)$. Then the following
conditions are equivalent:
\begin{itemize}
\item[\rm(a1)] $f$ is operator monotone decreasing;
\item[\rm(a2)] $f(A\,\triangledown\,B)\le f(A)\,\sigma\,f(B)$ for all $A,B\in B(\cH)^{++}$
and for all symmetric operator means $\sigma$;
\item[\rm(a3)] $f$ is operator log-convex, i.e., 
$f(A\,\triangledown\,B)\le f(A)\,\#\,f(B)$ for all $A,B\in B(\cH)^{++}$;
\item[\rm(a4)] $f(A\,\triangledown\,B)\le f(A)\,\sigma\,f(B)$ for all $A,B\in B(\cH)^{++}$
and for some symmetric operator mean $\sigma\ne\triangledown$.
\end{itemize}
\end{theorem}

The following lemma will play a crucial role in proving the theorem.

\begin{lemma}\label{L-2.2}
Let $\ffi$ be a continuous non-decreasing function on $[0,\infty)$ such that
$\ffi(0)=0$ and $\ffi(1)=1$. If a symmetric operator mean $\sigma$ satisfies
$$
\ffi(A\,\triangledown\,B)\le\ffi(A)\,\sigma\,\ffi(B),\qquad A,B\in B(\cH)^{++},
$$
then $\sigma=\triangledown$. {\rm(}Indeed, it is enough to assume that the above
inequality holds for all positive definite $2\times2$ matrices $A,B$.{\rm)}
\end{lemma}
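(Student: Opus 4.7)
The plan is to apply the hypothesis to positive-definite $2\times2$ regularizations of two rank-one projections and pass to the limit; the limiting inequality then determines the boundary value $h(0)$ of the representing function $h$ of $\sigma$ and forces $\sigma=\triangledown$.

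Fix unit vectors $e,f\in\bC^2$ with $|\<e,f\>|=c\in[0,1)$, let $P,R$ be the rank-one projections onto $e,f$ respectively, and for $\eps\in(0,1)$ set
$$
A_\eps:=P+\eps(I-P),\qquad B_\eps:=R+\eps(I-R),
$$
both positive definite with spectrum $\{1,\eps\}$. A direct calculation gives $A_\eps\triangledown B_\eps=\eps I+\tfrac{1-\eps}{2}(P+R)$, so $\ffi(A_\eps\triangledown B_\eps)\to\ffi((P+R)/2)$ as $\eps\searrow0$. Since $\ffi(0)=0$ and $\ffi(1)=1$, the spectral theorem yields $\ffi(A_\eps)=P+\ffi(\eps)(I-P)\downarrow P$ and $\ffi(B_\eps)\downarrow R$, and by joint strong monotone continuity of Kubo--Ando means $\ffi(A_\eps)\,\sigma\,\ffi(B_\eps)\to P\,\sigma\,R$. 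The hypothesis therefore yields, in the limit,
$$
\ffi\bigl((P+R)/2\bigr)\le P\,\sigma\,R.
$$

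The key step is to show $P\,\sigma\,R=h(0)(P+R)$. Since $P\ne R$, $P+R$ is invertible, so set $Y:=(P+R)^{-1/2}$ and apply the transformer equality to $(P+\delta I)\,\sigma\,(R+\delta I)$ and pass $\delta\searrow0$ to get $Y(P\,\sigma\,R)Y=(YPY)\,\sigma\,(YRY)$. Because $P$ is rank one, $P(P+R)^{-1}P=\<e,(P+R)^{-1}e\>\,P$, and the inner product evaluates to $1$ (direct from the spectral decomposition of $P+R$), so $(YPY)^2=YPY$: thus $\tilde P:=YPY$ is a rank-one projection, and $\tilde R:=YRY=I-\tilde P$ is its orthogonal complement. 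The projections $\tilde P,\tilde R$ commute, and on each of their common eigenspaces one acts as $1$ and the other as $0$, whence $\tilde P\,\sigma\,\tilde R=h(0)I$ by the scalar identities $1\,\sigma\,0=0\,\sigma\,1=h(0)$. Conjugating back, $P\,\sigma\,R=h(0)Y^{-2}=h(0)(P+R)$.

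Substituting gives $\ffi((P+R)/2)\le h(0)(P+R)$. Both sides commute with $P+R$, whose eigenvalues are $1\pm c$; reading off the $(1+c)$-eigenspace gives $\ffi((1+c)/2)\le h(0)(1+c)$. Letting $c\to1^-$ along pairs with $P\ne R$ yields $\ffi(1)=1\le2h(0)$, i.e., $h(0)\ge1/2$; combined with the bound $h(0)\le1/2$ from \eqref{F-2.3}, we conclude $h(0)=1/2$. Since $h$ is concave (being operator monotone on $[0,\infty)$), the values $h(0)=1/2$ and $h(1)=1$ force $h(x)\ge(x+1)/2$ on $[0,1]$ (the chord lies below the graph), and combined with $h(x)\le(x+1)/2$ this gives $h(x)=(x+1)/2$ on $[0,1]$; the symmetry $h(x)=xh(1/x)$ extends the equality to $[1,\infty)$, so $\sigma=\triangledown$. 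The main obstacle is the exact identification $P\,\sigma\,R=h(0)(P+R)$ for distinct rank-one projections in $\bC^2$; the transformer trick reducing to orthogonal projections sidesteps the need for the full Kubo--Ando integral representation.
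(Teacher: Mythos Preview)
Your proof is correct and follows essentially the same route as the paper's: regularize two rank-one projections, pass to the limit to obtain $\ffi((P+R)/2)\le P\,\sigma\,R$, identify the right-hand side as $h(0)(P+R)$, extract the scalar inequality, and let the projections approach each other to force $h(0)\ge 1/2$. The only notable differences are cosmetic: the paper cites the identity $P\,\sigma\,R=h(0)(P+R)$ from \cite[Theorem~3.7]{KA}, whereas you supply a neat self-contained proof via conjugation by $(P+R)^{-1/2}$; the paper reads off the $(1,1)$-entry of the inequality while you read off an eigenvalue; and the paper invokes analyticity of $h$ to pass from $[0,1]$ to $[0,\infty)$, while you use the symmetry $h(x)=xh(1/x)$.
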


\begin{proof}
Let $P$ and $Q$ be two orthogonal projections in $B(\cH)^+$ such that $P\wedge Q=0$. By
the assumption of the lemma applied to $A_\eps:=P+\eps I$ and $B_\eps:=Q+\eps I$ for
$\eps>0$, we have
$$
\ffi(A_\eps\,\triangledown\,B_\eps)\le\ffi(A_\eps)\,\sigma\,\ffi(B_\eps).
$$
Since $A_\eps\,\triangledown\,B_\eps=P\,\triangledown\,Q+\eps I\to P\,\triangledown\,Q$ in
the operator norm, $\ffi(A_\eps\,\triangledown\,B_\eps)\to\ffi(P\,\triangledown\,Q)$ as
$\eps\searrow0$ in the operator norm. Furthermore, since $\ffi(A_\eps)\searrow\ffi(P)=P$,
$\ffi(B_\eps)\searrow\ffi(Q)=Q$ as $\eps\searrow0$ and the operator mean is continuous in
the strong operator topology under the downward convergence, we have
\begin{equation}\label{F-2.4}
\ffi(P\,\triangledown\,Q)\le P\,\sigma\,Q.
\end{equation}
It follows from \cite[Theorem 3.7]{KA} that $P\,\sigma\,Q=h(0)(P+Q)$, where $h$ is a
symmetric operator monotone function corresponding to $\sigma$. Now choose two orthogonal
projections
$$
P:=\bmatrix1&0\\0&0\endbmatrix,\qquad
Q:=\bmatrix\cos^2\theta&\cos\theta\sin\theta\\\cos\theta\sin\theta&\sin^2\theta\endbmatrix
\quad\mbox{for $0<\theta<\pi/2$}
$$
in the realization of the $2\times2$ matrix algebra in $B(\cH)$. Then $P\wedge Q=0$, and
the diagonalization of $P\,\triangledown\,Q$ is
$$
P\,\triangledown\,Q
=\bmatrix\cos{\theta\over2}&\sin{\theta\over2}\\
\sin{\theta\over2}&-\cos{\theta\over2}\endbmatrix
\bmatrix{1+\cos\theta\over2}&0\\0&{1-\cos\theta\over2}\endbmatrix
\bmatrix\cos{\theta\over2}&\sin{\theta\over2}\\
\sin{\theta\over2}&-\cos{\theta\over2}\endbmatrix.
$$
Therefore,
$$
\ffi(P\,\triangledown\,Q)
=\bmatrix\cos{\theta\over2}&\sin{\theta\over2}\\
\sin{\theta\over2}&-\cos{\theta\over2}\endbmatrix
\bmatrix\ffi\bigl({1+\cos\theta\over2}\bigr)&0\\
0&\ffi\bigl({1-\cos\theta\over2}\bigr)\endbmatrix
\bmatrix\cos{\theta\over2}&\sin{\theta\over2}\\
\sin{\theta\over2}&-\cos{\theta\over2}\endbmatrix.
$$
Comparing the $(1,1)$-entries of both sides of \eqref{F-2.4} we have
$$
\cos^2{\theta\over2}\,\ffi\biggl({1+\cos\theta\over2}\biggr)
+\sin^2{\theta\over2}\,\ffi\biggl({1-\cos\theta\over2}\biggr)
\le h(0)(1+\cos^2\theta)
$$
so that
$$
h(0)\ge{\cos^2{\theta\over2}\,\ffi\bigl({1+\cos\theta\over2}\bigr)
+\sin^2{\theta\over2}\,\ffi\bigl({1-\cos\theta\over2}\bigr)
\over1+\cos^2\theta}.
$$
Letting $\theta\to0$ gives $h(0)\ge1/2$. Since $h(1)=1$ and $h$ is concave, it follows that
$h(x)\ge(x+1)/2$ and so by \eqref{F-2.3} $h(x)=(x+1)/2$ on $[0,1]$, implying
$\sigma=\triangledown$ by analyticity of $h$. The last statement in the parentheses is
obvious from the above proof.
\end{proof}

\noindent
{\it Proof of Theorem \ref{T-2.1}.}\enspace
As shown in the proof of Proposition 1.1, (a1) implies the inequality \eqref{F-1.2}. Hence
(a1) $\Rightarrow$ (a2) holds since the harmonic mean $!$ is the smallest among the
symmetric operator means. It is clear that (a2) $\Rightarrow$ (a3) $\Rightarrow$ (a4). Now
let us prove that (a4) $\Rightarrow$ (a1).

Assume (a4). Since
$$
f(A\,\triangledown\,B)\le f(A)\,\sigma\,f(B)\le f(A)\,\triangledown\,f(B),
\qquad A,B\in B(\cH)^{++},
$$
$f$ is operator convex (hence analytic) on $(0,\infty)$. Hence we may assume that $f(x)>0$
for all sufficiently large $x>0$; otherwise $f$ is identically zero. Since $f(\eps+x)$
obviously satisfies (a4) for any $\eps>0$, we may further assume that the finite limits
$f(+0):=\lim_{x\searrow0}f(x)$ and $f'(+0):=\lim_{x\searrow0}f'(x)$ exist. Then $f$ admits
an integral representation
\begin{equation}\label{F-2.5}
f(x)=\alpha+\beta x+\gamma x^2+\int_{(0,\infty)}
{(\lambda+1)x^2\over \lambda+x}\,d\mu(\lambda),
\end{equation}
where $\alpha,\beta\in\bR$ (indeed, $\alpha=f(+0)$, $\beta=f'(+0)$), $\gamma\ge0$, and
$\mu$ is a finite positive measure on $(0,\infty)$ (see \cite[V.5.5]{Bh}).
In the following we divide the proof into three steps; each step consists of a proof by
contradiction.

{\it Step 1.}\enspace
For $c>0$ large enough so that $f(c)>0$, we write
$$
{f(cx)\over f(c)}={{\alpha\over c^2}+{\beta\over c}x+\gamma x^2
+\int_{(0,\infty)}{(\lambda+1)x^2\over\lambda+cx}\,d\mu(\lambda)\over
{\alpha\over c^2}+{\beta\over c}+\gamma
+\int_{(0,\infty)}{\lambda+1\over\lambda+c}\,d\mu(\lambda)},
$$
and notice that for any fixed $x>0$,
$$
\lim_{c\to\infty}\int_{(0,\infty)}{(\lambda+1)x^2\over\lambda+cx}\,d\mu(\lambda)=0
$$
by the bounded convergence theorem. Suppose, by contradiction, that $\gamma>0$; then we
have
$$
\lim_{c\to\infty}{f(cx)\over f(c)}=x^2,\qquad x>0.
$$
Note that $f_c(x):=f(cx)/f(c)$ satisfies (a4) as well as $f$. Since the operator mean
$\sigma$ is continuous when restricted on the pairs of positive definite matrices, for
every positive definite $2\times2$ matrices $A,B$ (realized in $B(\cH)$) we can take the
limit of $f_c(A\,\triangledown\,B)\le f_c(A)\,\sigma\,f_c(B)$ as $c\to\infty$ to obtain
$(A\,\triangledown\,B)^2\le A^2\,\sigma\,B^2$. By Lemma \ref{L-2.2} for $\ffi(x)=x^2$,
this yields a contradiction with the assumption $\sigma\ne\triangledown$. Hence we must
have $\gamma=0$ so that
$$
f(x)=\alpha+\beta x+\int_{(0,\infty)}{(\lambda+1)x^2\over\lambda+x}\,d\mu(\lambda).
$$

{\it Step 2.}\enspace
For $c>0$ large enough, we write
\begin{equation}\label{F-2.6}
{f(cx)\over f(c)}={{\alpha\over c}+\beta x
+\int_{(0,\infty)}{(\lambda+1)cx^2\over\lambda+cx}\,d\mu(\lambda)\over
{\alpha\over c}+\beta+\int_{(0,\infty)}{(\lambda+1)c\over\lambda+c}\,d\mu(\lambda)}.
\end{equation}
For each fixed $x>0$,  since $(\lambda+1)cx/(\lambda+cx)\nearrow\lambda+1$ as
$c\nearrow\infty$, we notice by the monotone convergence theorem that
$$
\lim_{c\to\infty}\int_{(0,\infty)}{(\lambda+1)cx^2\over\lambda+cx}\,d\mu(\lambda)
=\biggl(\int_{(0,\infty)}(\lambda+1)\,d\mu(\lambda)\biggr)x.
$$
Suppose, by contradiction, that $\int_{(0,\infty)}(\lambda+1)\,d\mu(\lambda)=+\infty$.
For each $c,x\in(0,\infty)$ we set
\begin{equation}\label{F-2.7}
\rho(c,x):={\int_{(0,\infty)}{(\lambda+1)cx\over\lambda+cx}\,d\mu(\lambda)
\over\int_{(0,\infty)}{(\lambda+1)c\over\lambda+c}\,d\mu(\lambda)}.
\end{equation}
Since
\begin{align*}
&{(\lambda+1)c\over\lambda+c}\,x\le{(\lambda+1)cx\over\lambda+cx}
\le{(\lambda+1)c\over\lambda+c}\quad\mbox{if $0<x\le1$}, \\
&{(\lambda+1)c\over\lambda+c}\le{(\lambda+1)cx\over\lambda+cx}
\le{(\lambda+1)c\over\lambda+c}\,x\quad\mbox{if $x\ge1$},
\end{align*}
we notice that for every $c>0$,
\begin{equation}\label{F-2.8}
\begin{cases}
x\le\rho(c,x)\le1 & \text{if $0<x\le1$}, \\
1\le\rho(c,x)\le x & \text{if $x\ge1$},
\end{cases}
\end{equation}
and furthermore $\rho(c,x)$ is non-decreasing in $x>0$ for each fixed $c>0$. Let $D$
denote the countable set of all positive algebraic numbers. Since $\{\rho(c,x):c>0\}$ is
bounded for each fixed $x>0$, one can choose a sequence $\{c_n\}$ with
$0<c_n\nearrow\infty$ such that the limit
\begin{equation}\label{F-2.9}
\kappa(x):=\lim_{n\to\infty}\rho(c_n,x)
\end{equation}
exists for all $x\in D$. Then from \eqref{F-2.6} we obtain
$$
\ffi(x):=x\kappa(x)=\lim_{n\to\infty}{f(c_nx)\over f(c_n)},\qquad x\in D.
$$
Moreover, for each $n$ large enough, since $f_n(x):=f(c_nx)/f(c_n)$ satisfies (a4) and so
$f_n$ is operator convex on $(0,\infty)$, it follows that $\ffi(x)$ is convex on $D$.
Hence $\ffi$ can be extended to a continuous non-decreasing function on $[0,\infty)$,
and it follows from \eqref{F-2.8} that
$$
\begin{cases}
x^2\le\ffi(x)\le x & \text{if $0<x\le1$}, \\
x\le\ffi(x)\le x^2 & \text{if $x\ge1$}.
\end{cases}
$$
In particular, $\ffi(0)=0$ and $\ffi(1)=1$. Now let $A,B$ be positive definite $2\times2$
matrices (realized in $B(\cH)$) whose entries are all rational complex numbers. Since the
eigenvalues of $A$, $B$, and $A\,\triangledown\,B$ are in $D$, we can take the limit of
$f_n(A\,\triangledown\,B)\le f_n(A)\,\sigma\,f_n(B)$ to obtain
\begin{equation}\label{F-2.10}
\ffi(A\,\triangledown\,B)\le\ffi(A)\,\sigma\,\ffi(B).
\end{equation}
Furthermore, we approximate arbitrary positive definite $2\times2$ matrices by those of
rational complex entries and take the limit of \eqref{F-2.10} for approximating matrices
to see that \eqref{F-2.10} holds for all positive definite $2\times2$ matrices $A,B$. Then
Lemma \ref{L-2.2} implies that $\sigma=\triangledown$, a contradiction, so it must follow
that $\int_{(0,\infty)}(\lambda+1)\,d\mu(\lambda)<+\infty$.

{\it Step 3.}\enspace
Finally, suppose, by contradiction, that
$\beta+\int_{(0,\infty)}(\lambda+1)\,d\mu(\lambda)\ne0$. Then it is immediately seen from
\eqref{F-2.6} again that
$$
\lim_{c\to\infty}{f(cx)\over f(c)}=x,\qquad x>0.
$$
By Lemma \ref{L-2.2} for $\ffi(x)=x$, this yields a contradiction again, so we must have
$\beta+\int_{(0,\infty)}(\lambda+1)\,d\mu(\lambda)=0$ so that
$$
f(x)=\alpha+\int_{(0,\infty)}
\biggl\{{(\lambda+1)x^2\over\lambda+x}-(\lambda+1)x\biggr\}\,d\mu(\lambda)
=\alpha-\int_{(0,\infty)}{\lambda(\lambda+1)x\over\lambda+x}\,d\mu(\lambda).
$$
Since
$$
-{x\over\lambda+x}={\lambda\over\lambda+x}-1
$$
is operator monotone decreasing on $(0,\infty)$, so is $f$  and (a1) follows.\qed

\bigskip
The next theorem is the counterpart of Theorem \ref{T-2.1} for operator log-concave
functions, including the statement ($2^\circ$) in Section 1.

\begin{theorem}\label{T-2.3}
Let $f$ be a continuous nonnegative function on $(0,\infty)$. Then the following
conditions are equivalent:
\begin{itemize}
\item[\rm(b1)] $f$ is operator monotone;
\item[\rm(b2)] $f(A\,\triangledown\,B)\ge f(A)\,\sigma\,f(B)$ for all $A,B\in B(\cH)^{++}$
and for all symmetric means $\sigma$;
\item[\rm(b3)] $f$ is operator log-concave, i.e.,
$f(A\,\triangledown\,B)\ge f(A)\,\#\,f(B)$ for all $A,B\in B(\cH)^{++}$;
\item[\rm(b4)] $f(A\,\triangledown\,B)\ge f(A)\,\sigma\,f(B)$ for all $A,B\in B(\cH)^{++}$
and for some symmetric operator mean $\sigma\ne\,!$.
\end{itemize}
\end{theorem}

We need the following lemma to prove the theorem.

\begin{lemma}\label{L-2.4}
Let $f$ be a continuous nonnegative function on $(0,\infty)$, and assume that
\begin{equation}\label{F-2.11}
f(A\,\triangledown\,B)\ge f(A)\,!\,f(B),\qquad A,B\in B(\cH)^{++}.
\end{equation}
Then, either $f(x)>0$ for all $x>0$ or $f$ is identically zero. {\rm(}Indeed, it is enough
to assume that the above inequality holds for all positive definite $2\times2$ matrices
$A,B$.{\rm)}
\end{lemma}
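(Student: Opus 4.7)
Suppose, for contradiction, that $f$ vanishes at some point of $(0,\infty)$ but is not identically zero. Specializing the hypothesis to the scalar choices $A=xI$ and $B=yI$ reduces the matrix inequality to the numerical inequality $f((x+y)/2)\ge 2f(x)f(y)/(f(x)+f(y))$, so the open set $\{f>0\}$ is closed under arithmetic midpoints. Since an open midpoint-convex subset of $\bR$ is convex, $\{f>0\}=(a,b)$ for some $0\le a<b\le\infty$, and by hypothesis $(a,b)\ne(0,\infty)$; I treat the case $b<\infty$ (the case $a>0$ is handled by the analogous construction with $x_0$ chosen sufficiently small rather than sufficiently large). Rewriting the scalar inequality as $1/f((x+y)/2)\le(1/f(x)+1/f(y))/2$ shows that $1/f$ is midpoint-convex and continuous on $(a,b)$, hence convex, locally Lipschitz, and differentiable almost everywhere on $(a,b)$.

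To produce the contradiction I build $2\times2$ positive definite matrices for which the matrix inequality fails. Fix $\delta\in(a,b)$ at which $f$ is differentiable, fix $x_0>b$ with $(x_0+\delta)/2>b$, let $p=(e_1+e_2)/\sqrt 2$, $P=|p\>\<p|$, and for small $\eta\in(0,\delta-a)$ set
\[
A_\eta:=(\delta+\eta)P+(\delta-\eta)P^\perp=\delta I+\eta\sigma,\qquad
B:=x_0|e_1\>\<e_1|+\delta|e_2\>\<e_2|,
\]
where $\sigma:=|e_1\>\<e_2|+|e_2\>\<e_1|$. A direct computation shows that $A_\eta\,\triangledown\,B$ has eigenvalues $\lambda_+=(x_0+\delta)/2+O(\eta^2)>b$ and $\lambda_-=\delta+O(\eta^2)<b$, with $\lambda_-$-eigenvector $w_-=e_2-(\eta/(x_0-\delta))\,e_1+O(\eta^2)$. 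Since $f(\lambda_+)=0$ and, by differentiability of $f$ at $\delta$, $f(\lambda_-)=f(\delta)+o(\eta)$, one obtains
\[
f(A_\eta\,\triangledown\,B)=f(\lambda_-)\,w_-w_-^*=f(\delta)|e_2\>\<e_2|-\frac{f(\delta)\,\eta}{x_0-\delta}\sigma+o(\eta).
\]

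On the other hand $f(B)=f(\delta)|e_2\>\<e_2|$ is rank one, and $f(A_\eta)=f(\delta)I+\eta f'(\delta)\sigma+o(\eta)$ has its first-order $\eta$-correction off-diagonal in the $\{e_1,e_2\}$ basis. Applying the harmonic-mean formula $X\,!\,(c|e_2\>\<e_2|)=\frac{2}{1/c+\<e_2,X^{-1}e_2\>}\,|e_2\>\<e_2|$ together with $\<e_2,f(A_\eta)^{-1}e_2\>=1/f(\delta)+o(\eta)$ (the $\eta$-correction vanishes because it is proportional to $\<e_2,\sigma e_2\>=0$) yields $f(A_\eta)\,!\,f(B)=f(\delta)|e_2\>\<e_2|+o(\eta)$. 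Subtracting, $f(A_\eta\,\triangledown\,B)-f(A_\eta)\,!\,f(B)=-\frac{f(\delta)\eta}{x_0-\delta}\sigma+o(\eta)$, whose eigenvalues are $\pm f(\delta)\eta/(x_0-\delta)+o(\eta)$; one of them is strictly negative for all sufficiently small $\eta>0$, which contradicts the hypothesized $f(A_\eta\,\triangledown\,B)\ge f(A_\eta)\,!\,f(B)$. The principal obstacle to rigour is the regularity step, since the statement provides only continuity of $f$; it is resolved precisely by the midpoint-convexity argument of the first paragraph, which supplies differentiability of $1/f$ (and hence of $f$) almost everywhere on $(a,b)$, so we may indeed choose $\delta$ at such a point.
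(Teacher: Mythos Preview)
Your argument is correct and takes a genuinely different route from the paper's.  Both proofs begin the same way: specializing to scalars shows that $\{f>0\}$ is an open interval $(a,b)\subsetneq(0,\infty)$, so that there is a ``boundary'' point where $f$ switches from positive to zero.  From there the two proofs diverge.

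The paper works \emph{qualitatively}: applying the hypothesis to $A=\alpha I+\gamma H$, $B=\alpha I+\gamma K$ and comparing support projections, it extracts the universal constraint $s((H+K)_+)\ge s(H_+)\wedge s(K_+)$ for all Hermitian $H,K$, reduces this to $H>K\Rightarrow s(H_+)\ge s(K_+)$, and exhibits a concrete $2\times2$ counterexample.  Only continuity of $f$ is used, and the argument is structural rather than computational.

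Your approach is \emph{perturbative}: you first upgrade continuity to almost-everywhere differentiability via the observation that the scalar hypothesis makes $1/f$ midpoint-convex (hence convex and locally Lipschitz) on $(a,b)$.  Having fixed a differentiability point $\delta$, you build explicit $2\times2$ matrices $A_\eta$, $B$ for which a first-order expansion in $\eta$ shows that $f(A_\eta\,\triangledown\,B)-f(A_\eta)\,!\,f(B)$ has leading term proportional to $\sigma=\begin{pmatrix}0&1\\1&0\end{pmatrix}$, hence is indefinite.  The computations (eigenvalue/eigenvector expansions of $A_\eta\,\triangledown\,B$, the rank-one harmonic-mean formula, and the vanishing of $\<e_2,\sigma e_2\>$) are all verified correctly, and the symmetric case $a>0$ does go through with the indicated modification.

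The trade-off is clear: the paper's support-projection argument is cleaner and needs no regularity beyond continuity, but is less explicit; your approach is entirely hands-on and produces a concrete violating pair of matrices, at the cost of the preliminary convexity/differentiability step.  Note, incidentally, that your argument would still work at a non-differentiability point of $1/f$: the key quantity $\langle e_2,f(A_\eta)^{-1}e_2\rangle=\tfrac12\bigl(1/f(\delta+\eta)+1/f(\delta-\eta)\bigr)$ only needs the one-sided derivatives of the convex function $1/f$, so the differentiability hypothesis can be dropped if one tracks an extra diagonal term of order $\eta$ (which does not spoil the indefiniteness).
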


\begin{proof}
Assume that $f(x)=0$ for some $x>0$ but $f$ is not identically zero. The assumption
\eqref{F-2.11} applied to $A=aI$ and $B=bI$ gives $f(a\,\triangledown\,b)\ge f(a)\,!\,f(b)$
for every scalars $a,b>0$. By induction on $n\in\bN$ one can easily see that
\begin{equation}\label{F-2.12}
f((1-\lambda)a+\lambda b)\ge f(a)\,!_\lambda\,f(b)
\end{equation}
for all  $a,b>0$ and all $\lambda=k/2^n$, $k=0,1,\dots,2^n$, $n\in\bN$, where
$u\,!_\lambda\,v$ with $0\le\lambda\le1$ is the $\lambda$-harmonic mean for scalars
$u,v\ge0$ defined as
$$
u\,!_\lambda\,v:=\lim_{\eps\searrow0}
\bigl((1-\lambda)(u+\eps)^{-1}+\lambda(v+\eps)^{-1}\bigr)^{-1}.
$$
Furthermore, thanks to the continuity of $f$, \eqref{F-2.12} holds for all $a,b>0$
and all $\lambda\in[0,1]$. So we notice that $f(x)>0$ for all $x$ between $a,b$ whenever
$f(a)>0$ and $f(b)>0$. Thus it follows from the assumption on $f$ that there is an
$\alpha\in(0,\infty)$ such that the following (i) or (ii) holds:
\begin{itemize}
\item[(i)] $f(x)=0$ for all $x\in(0,\alpha]$ and $f(x)>0$ for all
$x\in(\alpha,\alpha+\delta]$ for some $\delta>0$,
\item[(ii)] $f(x)>0$ for all $x\in(0,\alpha)$ and $f(x)=0$ for all $x\in[\alpha,\infty)$.
\end{itemize}

Let $H$ and $K$ be $2\times2$ Hermitian matrices in the realization of the $2\times2$
matrix algebra in $B(\cH)$. For every $\gamma\in\bR$ such that $\alpha I+\gamma H$,
$\alpha I+\gamma K>0$ (i.e., positive definite), one can apply \eqref{F-2.11} to
$A:=\alpha I+\gamma H$ and $B:=\alpha I+\gamma K$ to obtain
\begin{equation}\label{F-2.13}
f\biggl(\alpha I+\gamma\,{H+K\over2}\biggr)
\ge f(\alpha I+\gamma H)\,!\,f(\alpha I+\gamma K).
\end{equation}
Write for short
$$
X:=f(\alpha I+\gamma H),\quad Y:=f(\alpha I+\gamma K),\quad
Z:=f\biggl(\alpha I+\gamma\,{H+K\over2}\biggr),
$$
and let $s(X)$, $s(Y)$, and $s(Z)$ denote the support projections of $X$, $Y$, and
$Z$, respectively, that is, the orthogonal projections onto the ranges of $X$, $Y$, and
$Z$ (in $\bC^2$), respectively. Since $X\ge\eps s(X)$ and $Y\ge\eps s(Y)$ for a
sufficiently small $\eps>0$, \eqref{F-2.13} implies that
$$
Z\ge\{\eps s(X)\}\,!\,\{\eps s(Y)\}=\eps\{s(X)\wedge s(Y)\}.
$$
Letting $P:=s(X)\wedge s(Y)$ we have
$$
0=(I-s(Z))Z(I-s(Z))\ge\eps(I-s(Z))P(I-s(Z))
$$
so that $P(I-s(Z))=0$ or equivalently $P\le s(Z)$. Therefore,
$$
s(Z)\ge s(X)\wedge s(Y).
$$
For each Hermitian matrix $S$ let $S=S_+-S_-$ be the Jordan decomposition of $S$. In the
case (i) choose a $\gamma>0$ small enough so that $\alpha I+\gamma H$,
$\alpha I+\gamma K\le(\alpha+\delta)I$, and in the case (ii) choose a $\gamma<0$ so that
$\alpha I+\gamma H$, $\alpha I+\gamma K>0$. Then we have
$$
s(X)=s(H_+),\quad s(Y)=s(K_+),\quad s(Z)=s((H+K)_+)
$$
and so
\begin{equation}\label{F-2.14}
s((H+K)_+)\ge s(H_+)\wedge s(K_+).
\end{equation}

Thus, to prove the lemma by contradiction, it suffices to show that \eqref{F-2.14} is not
true in general. We notice that \eqref{F-2.14} yields
\begin{equation}\label{F-2.15}
s(H_+)\ge s(K_+)\quad\mbox{whenever $H>K$}.
\end{equation}
In fact, letting $G:=H-K>0$ (hence $s(G_+)=s(G)=I$) we have
$$
s(H_+)=s((G+K)_+)\ge s(G_+)\wedge s(K_+)=s(K_+).
$$
Hence it suffices to show that \eqref{F-2.15} is not true in general. Now let
$P:=\bmatrix1&0\\0&0\endbmatrix$ and $Q:=\bmatrix1/2&1/2\\1/2&1/2\endbmatrix$, and define
$H:=P$ and $K:=\eps Q-(I-Q)$ for $\eps>0$. Then $s(H_+)=P\not\ge Q=s(K_+)$. But since
$$
H-K=\bmatrix1&0\\0&0\endbmatrix-\eps\bmatrix1/2&1/2\\1/2&1/2\endbmatrix
+\bmatrix1/2&-1/2\\-1/2&1/2\endbmatrix
=\bmatrix{3-\eps\over2}&-{1+\eps\over2}\\-{1+\eps\over2}&{1-\eps\over2}\endbmatrix
$$
and
$$
\det(H-K)=\biggl({3-\eps\over2}\biggr)\biggl({1-\eps\over2}\biggr)
-\biggl({1+\eps\over2}\biggr)^2
={1-3\eps\over2},
$$
we have $H>K$ for small $\eps>0$. Hence \eqref{F-2.15} is not true. The last statement in
the parentheses is obvious from the above proof.
\end{proof}

\noindent
{\it Proof of Theorem \ref{T-2.3}.}\enspace
Assume (b1); then $f$ is operator concave \cite[Theorem 2.5]{HP}, so (b2) follows. It is
obvious that (b2) $\Rightarrow$ (b3) $\Rightarrow$ (b4). Finally, let us prove that
(b4) $\Rightarrow$ (b1). Since (b4) implies the assumption of Lemma \ref{L-2.4}, we may
assume by Lemma \ref{L-2.4} that $f(x)>0$ for all $x>0$. Then (b4) implies that
$$
f(A\,\triangledown\,B)^{-1}\le(f(A)\,\sigma\,f(B))^{-1}
=f(A)^{-1}\,\sigma^*\,f(B)^{-1},\qquad A,B\in B(\cH)^{++},
$$
where $\sigma^*$ is the adjoint of $\sigma$, the symmetric operator mean defined by
$A\,\sigma^*\,B:=(A^{-1}\,\sigma\,B^{-1})^{-1}$ \cite{KA}. Since $\sigma\ne\,!$
means that $\sigma^*\ne\triangledown$, Theorem \ref{T-2.1} implies that $1/f$ is operator
monotone decreasing, so (b1) follows.\qed

\begin{remark}\label{R-2.5}\rm
By Lemma \ref{L-2.4} it is also seen that a continuous nonnegative function $f$ on
$(0,\infty)$ satisfies \eqref{F-2.11} if and only if $f$ is identically zero, or $f>0$ and
$1/f$ is operator convex.
\end{remark}

\begin{remark}\label{R-2.6}\rm
For each $\lambda\in[0,1]$ the $\lambda$-arithmetic and the $\lambda$-harmonic means are
$A\,\triangledown_\lambda\,B:=(1-\lambda)A+\lambda B$ and
$A\,!_\lambda\,B:=((1-\lambda)A^{-1}+\lambda B^{-1})^{-1}$ for $A,B\in B(\cH)^{++}$.
Let $\sigma$ be an operator mean corresponding to an operator monotone function $h$ on
$[0,\infty)$ such that $h'(1)=\lambda$. Then we have
$A\,\triangledown_\lambda\,B\ge A\,\sigma\,B\ge A\,!_\lambda\,B$ extending \eqref{F-2.2}.
As in the proof of Proposition \ref{P-1.1},
$$
f(A\,\triangledown_\lambda\,B)\le f(A)\,!_\lambda\,f(B)
\le f(A)\,\sigma\,f(B),\qquad A,B\in B(\cH)^{++},
$$
whenever $f\ge0$ is operator monotone decreasing on $(0,\infty)$. Consequently, for such a
function $f$,
\begin{equation}\label{F-2.16}
f(A\,\triangledown_\lambda\,B)\le f(A)\,\#_\lambda\,f(B),
\qquad A,B\in B(\cH)^{++},
\end{equation}
where $\#_\lambda$ is the $\lambda$-power mean corresponding to the power function
$x^\lambda$. The reversed inequality of \eqref{F-2.16} holds if $f$ is operator monotone.
We may adopt \eqref{F-2.16} for the definition of operator log-convexity. Indeed, if $f$
is a nonnegative function (not assumed to be continuous) on $(0,\infty)$ and satisfies
\eqref{F-2.16} for all positive definite $n\times n$ matrices $A,B$ of every $n$, then $f$
is continuous and a standard convergence argument shows that $f$ is operator log-convex. 
\end{remark}

\begin{remark}\label{R-2.7}\rm
The arithmetic and the harmonic means of $n$ operators $A_1,\dots,A_n$ in $B(\cH)^{++}$
are
$$
\mathbf{A}(A_1,\dots,A_n):={A_1+\dots+A_n\over n},\quad
\mathbf{H}(A_1,\dots,A_n):=\biggl({A^{-1}+\dots+A_n^{-1}\over n}\biggr)^{-1}.
$$
The geometric mean $\mathbf{G}(A_1,\dots,A_n)$ for $n\ge3$ was rather recently introduced
in \cite{ALM} in a recursive way. (A different notion of geometric means for $n$ operators
is in \cite{BH}.) From the arithmetic-geometric-harmonic mean inequality for $n$ operators
in \cite{ALM}, we have
$$
f(\mathbf{A}(A_1,\dots,A_n))\le\mathbf{H}(f(A_1),\dots,f(A_n))
\le\mathbf{G}(f(A_1),\dots,f(A_n))
$$
if $f\ge0$ is operator monotone decreasing on $(0,\infty)$, and if $f$ is operator
monotone,
$$
f(\mathbf{A}(A_1,\dots,A_n))\ge\mathbf{A}(f(A_1),\dots,f(A_n))
\ge\mathbf{G}(f(A_1),\dots,f(A_n)).
$$
\end{remark}

\section{Further characterizations}
\setcounter{equation}{0}

In this section we present further conditions equivalent to those of Theorems \ref{T-2.1}
and \ref{T-2.3}, respectively. To exclude the singular case of identically zero function
and thus make statements simpler, we assume throughout the section that $f$ is a
continuous positive (i.e., $f(x)>0$ for all $x>0$) function on $(0,\infty)$.

\begin{theorem}\label{T-3.1}
For a continuous positive function $f$ on $(0,\infty)$, each of the following
conditions {\rm(a5)}--{\rm(a13)} is equivalent to {\rm(a1)}--{\rm(a4)} of Theorem
\ref{T-2.1}:
\begin{itemize}
\item[\rm(a5)] $\bmatrix f(A)&f(A\,\triangledown\,B)\\f(A\,\triangledown\,B)&f(B)
\endbmatrix\ge0$ for all $A,B\in B(\cH)^{++}$, where
$\bmatrix X_{11}&X_{12}\\X_{21}&X_{22}\endbmatrix$ for $X_{ij}\in B(\cH)$ is considered
as an operator in $B(\cH\oplus\cH)$ as usual;
\item[\rm(a6)] $f(A\,\triangledown\,B)f(B)^{-1}f(A\,\triangledown\,B)\le f(A)$ for all
$A,B\in B(\cH)^{++}$;
\item[\rm(a7)] $f(A\,\triangledown\,B)\le{1\over2}\{\lambda f(A)+\lambda^{-1}f(B)\}$ for
all $A,B\in B(\cH)^{++}$ and all $\lambda>0$;
\item[\rm(a8)] $A\in B(\cH)^{++}\mapsto\log\<\xi,f(A)\xi\>$ is convex for every
$\xi\in\cH$;
\item[\rm(a9)] $(A,\xi)\mapsto\<\xi,f(A)\xi\>$ is jointly convex for $A\in B(\cH)^{++}$
and $\xi\in\cH$;
\item[\rm(a10)] $f$ is operator convex and the numerical function $\log f(x)$ is convex;
\item[\rm(a11)] both $f$ and $\log f$ are operator convex;
\item[\rm(a12)] $f$ is operator convex and the numerical function $f(x)$ is non-increasing;
\item[\rm(a13)] $f$ admits a representation
\begin{equation}\label{F-3.1}
f(x)=\alpha+\int_{[0,\infty)}{\lambda+1\over\lambda+x}\,d\mu(\lambda),
\end{equation}
where $\alpha\ge0$ and $\mu$ is a finite positive measure on $[0,\infty)$.
\end{itemize}
\end{theorem}

Before proving the theorem we give the next lemma, which may be of independent interest.

\begin{lemma}\label{L-3.2}
Let $\ffi(x)$ be a continuous non-decreasing function on $(0,\infty)$ such that
$\ffi(0)=0$ and $\ffi(1)=1$. Then $(A,\xi)\mapsto\<\xi,\ffi(A)\xi\>$ for
$A\in B(\cH)^{++}$ and $\xi\in\cH$ cannot be jointly convex. {\rm(}Indeed, this functional
cannot be jointly convex even when $A$ is restricted to positive definite $2\times2$
matrices and $\xi$ to vectors in $\bC^2$.{\rm)}
\end{lemma}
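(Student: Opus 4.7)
The plan is to contradict joint convexity by restricting the test data to scalar matrices and degenerating one of the vector arguments to $0$. Assume for contradiction that $(A,\xi)\mapsto\langle\xi,\ffi(A)\xi\rangle$ is jointly convex on $M_2(\bC)^{++}\times\bC^2$. Fix any unit vector $u\in\bC^2$ and any $a,b>0$, and apply midpoint joint convexity to the two pairs $(aI,u)$ and $(bI,0)$; since $aI$, $bI$ and $\tfrac{a+b}{2}I$ are scalar, $\ffi$ acts on each of them as the corresponding scalar, and the inequality
\[
\Bigl\langle\tfrac{u}{2},\ffi\bigl(\tfrac{a+b}{2}I\bigr)\tfrac{u}{2}\Bigr\rangle
\le\tfrac12\langle u,\ffi(aI)u\rangle+\tfrac12\langle 0,\ffi(bI)\cdot 0\rangle
\]
collapses to the scalar inequality
\[
\ffi\!\bigl(\tfrac{a+b}{2}\bigr)\le 2\,\ffi(a),\qquad a,b>0.
\]

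From here the conclusion follows by a short squeeze. Letting $b\to\infty$ with $a$ fixed, the left-hand side increases (by monotonicity of $\ffi$) to $\sup_{x>0}\ffi(x)$, so $\sup\ffi\le 2\,\ffi(a)$ for every $a>0$; in particular $\sup\ffi$ is finite, and since $\ffi(1)=1$ forces $\sup\ffi\ge 1$, we get $\ffi(a)\ge 1/2$ for every $a>0$. On the other hand, continuity of $\ffi$ at $0$ together with $\ffi(0)=0$ yields $\lim_{a\searrow 0}\ffi(a)=0$, a contradiction. All matrices and vectors appearing in the argument live in $M_2(\bC)^{++}$ and $\bC^2$, so the parenthetical sharpening in the statement is automatic.

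The only step that requires genuine insight is the choice of test data: allowing the second vector to be $\eta=0$ (legal because joint convexity is required for all $(A,\xi)\in B(\cH)^{++}\times\cH$ with no restriction on $\xi$) introduces a mismatch between the factor $\|(\xi+\eta)/2\|^2=1/4$ on the left-hand side and the coefficient $1/2$ on the right-hand side, producing the $2:1$ amplification that is incompatible with the boundary data $\ffi(0)=0$, $\ffi(1)=1$ under monotonicity. No regularity of $\ffi$ beyond continuity is used, so I do not anticipate further obstacles.
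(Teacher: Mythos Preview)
Your argument is correct and considerably more elementary than the paper's. The paper proceeds by recalling the variational formula for the parallel sum,
\[
\langle\xi,(A:B)\xi\rangle=\inf\{\langle\xi_1,A\xi_1\rangle+\langle\xi_2,B\xi_2\rangle:\xi=\xi_1+\xi_2\},
\]
and uses joint convexity (with $\xi_1+\xi_2$ ranging over all decompositions of $\xi$) to deduce the operator inequality $\ffi(A\,\triangledown\,B)\le\ffi(A)\,!\,\ffi(B)$ for all $A,B\in B(\cH)^{++}$; it then invokes Lemma~\ref{L-2.2} (whose proof relies on an explicit $2\times2$ projection computation) to conclude that the harmonic mean would have to coincide with the arithmetic mean, a contradiction. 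Your route bypasses both the parallel-sum formula and Lemma~\ref{L-2.2} entirely: by testing against the scalar pairs $(aI,u)$ and $(bI,0)$ you extract the inequality $\ffi\bigl((a+b)/2\bigr)\le 2\ffi(a)$ and finish with a two-line limit argument using only monotonicity and the boundary values $\ffi(0)=0$, $\ffi(1)=1$. What you give up is the intermediate operator-mean inequality, which ties into the paper's broader theme; what you gain is a self-contained proof that needs no matrix analysis beyond the definition of joint convexity.
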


\begin{proof}
First, recall the well-known expression for the parallel sum:
\begin{equation}\label{F-3.2}
\<\xi,(A:B)\xi\>=\inf\{\<\xi_1,A\xi_1\>+\<\xi_2,B\xi_2\>:
\xi=\xi_1+\xi_2,\ \xi_1,\xi_2\in\cH\}
\end{equation}
for any $A,B\in B(\cH)^{++}$ and $\xi\in\cH$ (see \cite[Theorem I.3]{An} for example).
Suppose, by contradiction, that the functional in question is jointly convex. Let us show
that
\begin{equation}\label{F-3.3}
\ffi(A\,\triangledown\,B)\le\ffi(A)\,!\,\ffi(B),\qquad A,B\in B(\cH)^{++}.
\end{equation}
For any decomposition $\xi=\xi_1+\xi_2$ of $\xi\in\cH$ we have 
\begin{align*}
\<\xi,\ffi(A\,\triangledown\,B)\xi\>
&=4\biggl\<{\xi_1+\xi_2\over2},\ffi\biggl({A+B\over2}\biggr)
\biggl({\xi_1+\xi_2\over2}\biggr)\biggr\> \\
&\le2\{\<\xi_1,\ffi(A)\xi_1\>+\<\xi_2,\ffi(B)\xi_2\>\},
\end{align*}
which implies by \eqref{F-3.2} that
$$
\<\xi,\ffi(A\,\triangledown\,B)\xi\>\le\<\xi,(\ffi(A)\,!\,\ffi(B))\xi\>.
$$
Hence \eqref{F-3.3} follows, which yields a contradiction by Lemma \ref{L-2.2}.
\end{proof}

\noindent
{\it Proof of Theorem \ref{T-3.1}.}\enspace
(a5) $\Leftrightarrow$ (a6) is well known (see \cite[Theorem I.1]{An}, \cite[1.3.3]{Bh2}).
(a5) $\Rightarrow$ (a3) follows from the following characterization of the geometric mean
given in \cite[Theorem I.2 and its proof]{An}:
$$
X\,\#\,Y=\max\biggl\{Z\in B(\cH)^+:\bmatrix X&Z\\Z&Y\endbmatrix\ge0\biggr\}
\quad\mbox{for $X,Y\in B(\cH)^+$}.
$$
The implications (a3) $\Rightarrow$ (a7) $\Rightarrow$ (a8) were already shown in the proof
of Proposition \ref{P-1.1}.

(a8) $\Rightarrow$ (a10).\enspace
The operator convexity of $f$ is immediate because $f$ is operator convex if (and only if)
$A\in B(\cH)^{++}\mapsto\<\xi,f(A)\xi\>$ is convex for every $\xi\in\cH$. The convexity of
$\log f(x)$ is also obvious by taking $A=aI$ in (a8).

(a10) $\Rightarrow$ (a1).\enspace
This can be shown in a manner similar to the three-stepped proof of
(a4) $\Rightarrow$ (a1) of Theorem \ref{T-2.1}. By considering $f(\eps+x)$ for each
$\eps>0$, we may assume that $f$ admits the representation \eqref{F-2.5}. For Step 1,
suppose that $\gamma>0$; then we have $\lim_{c\to\infty}f(cx)/f(c)=x^2$ for all $x>0$.
Since $\log f(cx)$ is convex by assumption, the limit function $2\log x$ is convex as well,
which is absurd. Hence $\gamma=0$.

For Step 2, suppose that $\int_{(0,\infty)}(\lambda+1)\,d\mu(\lambda)=+\infty$. One can
choose a sequence $\{c_n\}$ with $0<c_n\nearrow\infty$ such that the limit $\kappa(x)$ in
\eqref{F-2.9}, with $\rho(c,x)$ in \eqref{F-2.7}, exists for all rational numbers $x>0$.
From \eqref{F-2.8} and \eqref{F-2.6} we have $1\le\kappa(x)\le x$ for all rational $x\ge1$
and $\ffi(x):=x\kappa(x)=\lim_{n\to\infty}f(c_nx)/f(c_n)$ for all rational $x>0$. Since
$\log f(c_nx)$ is convex on $(0,\infty)$, it follows that $\log\ffi(x)$ is convex on the
rational numbers $x\ge1$. Hence $\ffi$ can be extended to a continuous function on
$[1,\infty)$ so that $\psi(x):=\log\ffi(x)$ is convex on $[1,\infty)$ and
\begin{equation}\label{F-3.4}
\log x\le\psi(x)\le2\log x,\qquad x\ge1.
\end{equation}
For any $a\ge1$, by convexity of $\psi$ we have
$$
{\psi(a)\over a}\le\lim_{x\to\infty}{\psi(x)\over x}
\le2\lim_{x\to\infty}{\log x\over x}=0.
$$
Hence $\psi(a)=0$ for all $a\ge1$, which contradicts the first inequality in \eqref{F-3.4}.
Hence $\int_{(0,\infty)}(\lambda+1)\,d\mu(\lambda)<+\infty$.

Step 3 here is the same as that in the proof of (a4) $\Rightarrow$ (a1) of Theorem
\ref{T-2.1} by considering the limit function $\log x$ of $\log(f(cx)/f(c))$ as
$c\to\infty$.

(a1) $\Rightarrow$ (a13).\enspace
This implication was shown in the proof of the main theorem of \cite{Ha1}, and
the converse is obvious. We state (a13) since it is useful to derive (a5) from (a1). The
following proof is slightly simpler than that in \cite{Ha1}.
Since (a1) is equivalent to $f(x^{-1})$ being operator monotone, we have a representation
\begin{equation}\label{F-3.5}
f(x^{-1})=\alpha+\beta x
+\int_{(0,\infty)}{(\lambda+1)x\over\lambda+x}\,d\nu(\lambda),
\end{equation}
where $\alpha,\beta\ge0$ and $\nu$ is a positive finite measure on $(0,\infty)$
\cite[pp.\ 144-145]{Bh}. By taking $d\mu(\lambda):=d\nu(\lambda^{-1})$ on $(0,\infty)$
and by extending it to a measure on $[0,\infty)$ with $\mu(\{0\})=\beta$, the
representation \eqref{F-3.5} is transformed into \eqref{F-3.1}.

(a13) $\Rightarrow$ (a5).\enspace
Thanks to (a5) $\Leftrightarrow$ (a6) as mentioned above, it suffices to show that the
component functions $f_1(x):=\alpha$, $f_2(x):=1/x$, and $f_3(x):=1/(x+\lambda)$ for
$\lambda>0$ in the expression \eqref{F-3.1} satisfy the inequality in (a6). It is trivial
for $f_1$. For $f_2$ we have to show that
$$
\biggl({A+B\over2}\biggr)^{-1}B\biggl({A+B\over2}\biggr)^{-1}\le A^{-1},
$$
or equivalently,
\begin{equation}\label{F-3.6}
\biggl({A+B\over2}\biggr)B^{-1}\biggl({A+B\over2}\biggr)\ge A.
\end{equation}
With $C:=B^{-1/2}AB^{-1/2}$, \eqref{F-3.6} is further reduced to ${1\over4}(C+I)^2\ge C$,
which obviously holds. The assertion for $f_3$ follows from that for $f_2$ by taking
$A+\lambda I$ and $B+\lambda I$ in place of $A$ and $B$.

Now, conditions (a9), (a11), and (a12) are outside the above proved circle of
equivalence, whose equivalence to (a1) is proved below.

(a1) $\Leftrightarrow$ (a11).\enspace
Since (a1) implies that $1/f$ is operator monotone and since $\log x$ is operator monotone
on $(0,\infty)$, it is immediate to see that $\log(1/f)=-\log f$ is operator monotone. This
implies that $-\log f$ is operator concave or $\log f$ is operator convex. For the
converse, (a11) $\Rightarrow$ (a10) is trivial.

(a1) $\Leftrightarrow$ (a9).\enspace
The implication (a13) $\Rightarrow$ (a9) was shown in \cite[Remark 4.6]{Ha2}. The proof
of (a9) $\Rightarrow$ (a1) can be done similarly to (a4) $\Rightarrow$ (a1) of Theorem
\ref{T-2.1} by dividing into three steps. First, from the fact mentioned in the proof of
(a8) $\Rightarrow$ (a10), we may assume that $f$ admits the representation \eqref{F-2.5}.
Then for Steps 1 and 3, we may only notice that the functions $x^2$ and $x$ do not satisfy
(a9) as particular cases of Lemma \ref{L-3.2}. For Step 2, suppose that
$\int_{(0,\infty)}(\lambda+1)\,d\mu(\lambda)=+\infty$; then as in the proof of
(a4) $\Rightarrow$ (a1) there is a sequence $c_n\nearrow\infty$ such that
$\ffi(x):=\lim_{n\to\infty}f(c_nx)/f(c_n)$ exists for all algebraic numbers $x>0$, and
$\ffi$ can be extended to a continuous non-decreasing function on $[0,\infty)$ with
$\ffi(0)=0$ and $\ffi(1)=1$. Furthermore, since $f(c_nx)$ satisfies (a9), it follows as
in the proof of (a4) $\Rightarrow$ (a1) that $\ffi$ satisfies (a9) as well when $A$ is
restricted to positive definite $2\times2$ matrices. This yields a contradiction by Lemma
\ref{L-3.2}, which shows that $\int_{(0,\infty)}(\lambda+1)\,d\mu(\lambda)<+\infty$.

(a1) $\Leftrightarrow$ (a12).\enspace
The implication (a1) $\Rightarrow$ (a12) is immediate since (a1) implies the operator
convexity of $f$. The converse can be proved once again similarly to (a10) $\Rightarrow$
(a1); just use the non-increasingness of $f(cx)$ instead of the convexity of $\log f(cx)$.
In fact, for Step 2, if we suppose that
$\int_{(0,\infty)}(\lambda+1)\,d\mu(\lambda)=+\infty$, then the function $\varphi(x)$
defined and extended as above is non-increasing by the assumption (a12) as well as
non-decreasing with $\varphi(x)\ge x$ for $x\ge1$ (by the definition of $\ffi$). This is
a contradiction.\qed

\begin{remark}\label{R-3.3}\rm
Let $\Phi:B(\cH)\to B(\mathcal{K})$ be a positive linear map, where $\mathcal{K}$ is
another Hilbert space. If $f$ is operator log-convex on $(0,\infty)$, then we have
$$
\Phi(f(A\,\triangledown\,B))\le\Phi(f(A)\,\#\,f(B))
\le\Phi(f(A))\,\#\,\Phi(f(B))
$$
for all $A,B\in B(\cH)^+$ thanks to \cite[Corollary IV.1.3]{An}. This in particular gives
another proof of (a3) $\Rightarrow$ (a8) by taking a positive linear functional as $\Phi$.
\end{remark}

\begin{remark}\label{R-3.4}\rm
The implication (a3) $\Rightarrow$ (a11) says that \eqref{F-1.3} implies \eqref{F-1.4},
that is, the operator log-convexity of $f$ implies that $\log f$ is operator convex. This
may also justify our term operator log-convexity.
\end{remark}

\begin{remark}\label{R-3.5}\rm
In \cite[Remark 4.6]{Ha2} Hansen posed the question to characterize functions $f$ on
$(0,\infty)$ for which condition (a9) holds. By taking $A=aI$ in $\<\xi,f(A)\xi\>$ for any
fixed $a\in(0,\infty)$, it is clear that $f$ must be nonnegative whenever it satisfies
(a9). Consequently, Theorem \ref{T-3.1} settles the above question as follows: A continuous
function $f$ on $(0,\infty)$ satisfies (a9) if and only if $f$ is nonnegative and operator
monotone decreasing, or equivalently, $f$ admits a representation in (a13).
\end{remark}

\begin{remark}\label{R-3.6}\rm
In \cite{Uc} Uchiyama recently proved that a continuous (not necessarily positive) function
$f$ on $(0,\infty)$ is operator monotone decreasing if and only if it is operator convex
and $f(\infty):=\lim_{x\to\infty}f(x)<+\infty$. This implies that (a1) $\Leftrightarrow$
(a13), because the non-increasingness of a convex function $f$ on
$(0,\infty)$ is equivalent to $f(\infty)<+\infty$.
\end{remark}

The following is the concave counterpart of Theorem \ref{T-3.1}, which is easily shown by
converting corresponding conditions of Theorem \ref{T-3.1}.

\begin{theorem}\label{T-3.6}
For a continuous positive function $f$ on $(0,\infty)$, each of the following
conditions {\rm(b5)}--{\rm(b10)} is equivalent to {\rm(b1)}--{\rm(b4)} of Theorem
\ref{T-2.3}:
\begin{itemize}
\item[\rm(b5)] $\bmatrix f(A)&f(A\,!\,B)\\f(A\,!\,B)&f(B)
\endbmatrix\ge0$ for all $A,B\in B(\cH)^{++}$;
\item[\rm(b6)] $f(A\,\triangledown\,B)f(B)^{-1}f(A\,\triangledown\,B)\ge f(A)$ for all
$A,B\in B(\cH)^{++}$;
\item[\rm(b7)] $f(A\,!\,B)\le{1\over2}\{\lambda f(A)+\lambda^{-1}f(B)\}$ for
all $A,B\in B(\cH)^{++}$ and all $\lambda>0$;
\item[\rm(b8)] $A\in B(\cH)^{++}\mapsto\log\<\xi,f(A)\xi\>$ is concave for
every $\xi\in\cH$;
\item[\rm(b9)] $f$ is operator concave;
\item[\rm(b10)] $f$ admits a representation
$$
f(x)=\alpha+\beta x+\int_{(0,\infty)}{(\lambda+1)x\over\lambda+x}\,d\mu(\lambda),
$$
where $\alpha,\beta\ge0$ and $\mu$ is a finite positive measure on $(0,\infty)$.
\end{itemize}
\end{theorem}

\begin{proof}
Since $f$ satisfies (b1) if and only if $1/f$ (or $f(x^{-1})$) satisfies (a1), each
condition of Theorem \ref{T-3.1} for $1/f$ (or $f(x^{-1})$) instead of $f$ is equivalent
to (b1). (b5) and (b7) are (a5) and (a7) for $f(x^{-1})$, respectively. Also, (b6) is
(a6) for $1/f$.

The implication (b1) $\Rightarrow$ (b8) is a particular case of Proposition \ref{P-1.2}.
Conversely, assume (b8). For every $A\in B(\cH)^{++}$ and $\xi\in\cH$ notice that
$$
\<\xi,f(A)^{-1}\xi\>=\sup_{\eta\ne0}{|\<\xi,\eta\>|^2\over\<\eta,f(A)\eta\>}
$$
and so
$$
\log\<\xi,f(A)^{-1}\xi\>
=\sup_{\eta\ne0}\bigl\{2\log|\<\xi,\eta\>|-\log\<\eta,f(A)\eta\>\bigr\}.
$$
Since (b8) implies that $A\in B(\cH)^{++}\mapsto2\log|\<\xi,\eta\>|-\log\<\eta,f(A)\eta\>$
is convex, it follows that $1/f$ satisfies (a8). Hence (b8) $\Rightarrow$ (b1).

Finally, (b1) $\Leftrightarrow$ (b9) and (b1) $\Leftrightarrow$ (b10) are well known
\cite{Bh,HP}, which were indeed used in the proofs of Theorems \ref{T-2.3} and \ref{T-3.1}.
We state (b9) and (b10) just for the sake of completeness.
\end{proof}

\section{More about operator monotony and operator \\ means}
\setcounter{equation}{0}

When $f$ is an operator monotone (not necessarily nonnegative) function on $(0,\infty)$,
it is obvious that
$$
f(A\,\triangledown\,B)\ge f(A\,\#\,B)\ge f(A\,!\,B),\qquad A,B\in B(\cH)^{++}.
$$
In the next proposition we show that an inequality such as
$f(A\,\triangledown\,B)\ge f(A\,\#\,B)$ for all $A,B\in B(\cH)^{++}$
conversely implies the operator monotony of $f$, thus giving yet another characterization
of operator monotone functions on $(0,\infty)$ in terms of operator means.

\begin{proposition}\label{P-4.1}
A continuous function $f$ on $(0,\infty)$ is operator monotone if and only if one of the
following conditions holds:
\begin{itemize}
\item[\rm(1)] $f(A\,\triangledown\,B)\ge f(A\,\sigma\,B)$ for all $A,B\in B(\cH)^{++}$ and
for some symmetric operator mean $\sigma\ne\triangledown$;
\item[\rm(2)] $f(A\,!\,B)\le f(A\,\sigma\,B)$ for all $A,B\in B(\cH)^{++}$ and for
some symmetric operator mean $\sigma\ne\,!$.
\end{itemize}
The operator monotone decreasingness of $f$ is equivalent to each of {\rm(1)} and {\rm(2)}
with the reversed inequality.
\end{proposition}

Note by \eqref{F-2.2} that the inequalities in (1) and (2) actually hold for all symmetric
operator means if $f$ is operator monotone. We first prove the next lemma.

\begin{lemma}\label{L-4.2}
Let $\sigma$ be a symmetric operator mean such that $\sigma\ne\triangledown$, and let
$\gamma_0:=2\,\sigma\,0$. If $X,Y\in B(\cH)^{++}$ and $X\ge Y\ge\gamma X$ with
$\gamma\in(\gamma_0,1]$, then there exist $A,B\in B(\cH)^{++}$ such that
$X=A\,\triangledown\,B$ and $Y=A\,\sigma\,B$.
\end{lemma}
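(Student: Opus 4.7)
\bigskip
\noindent
\textbf{Proof proposal.} The plan is to normalize to the case $X=I$ via the congruence invariance of operator means, and then to construct $A,B$ as $I\pm T$ for a Hermitian $T$ obtained by inverting a scalar function associated with $\sigma$.

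First I would set $Z:=X^{-1/2}YX^{-1/2}$, so that $\gamma I\le Z\le I$ by the hypothesis $X\ge Y\ge\gamma X$. The transformation identity $X^{1/2}(A'\,\sigma\,B')X^{1/2}=(X^{1/2}A'X^{1/2})\,\sigma\,(X^{1/2}B'X^{1/2})$ recalled from \cite{KA} shows that it suffices to find $A',B'\in B(\cH)^{++}$ with $A'+B'=2I$ and $A'\,\sigma\,B'=Z$, and then put $A:=X^{1/2}A'X^{1/2}$ and $B:=X^{1/2}B'X^{1/2}$.

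Next I would seek $A'=I+T$ and $B'=I-T$ with $T=T^*$ and $\|T\|<1$. Since $A'$ and $B'$ commute, the Kubo--Ando formula reduces, through functional calculus of $T$, to $A'\,\sigma\,B'=A'\,h(A'^{-1}B')=\psi(T)$, where $h$ is the representing function of $\sigma$ and $\psi(t):=(1+t)\,\sigma\,(1-t)=(1+t)\,h\bigl((1-t)/(1+t)\bigr)$ for $t\in(-1,1)$. By the symmetry $h(x)=xh(x^{-1})$ the function $\psi$ is even; by concavity of $h$ it is concave; and $\psi(0)=h(1)=1$, while $\lim_{|t|\to1}\psi(t)=2h(0)=\gamma_0$.

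The key analytic step is that $\psi$ extends to a strictly decreasing homeomorphism $[0,1]\to[\gamma_0,1]$. Non-constancy of $\psi$ follows from $\sigma\ne\triangledown$; combined with evenness, concavity, and real-analyticity of $h$ on $(0,\infty)$, this rules out $\psi$ being locally constant near its interior maximum at $0$, so $\psi$ is strictly decreasing on $[0,1]$. Since $\gamma>\gamma_0$, set $t_\gamma:=\psi^{-1}(\gamma)<1$. The spectrum of $Z$ lies in $[\gamma,1]$, so $T:=\psi^{-1}(Z)$, defined by functional calculus using a continuous extension of $\psi^{-1}$, is a Hermitian operator with $\|T\|\le t_\gamma<1$. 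Then $A':=I+T$ and $B':=I-T$ belong to $B(\cH)^{++}$, satisfy $A'+B'=2I$, and by functional calculus $A'\,\sigma\,B'=\psi(T)=Z$, completing the construction.

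The main obstacle I anticipate is the strict monotonicity of $\psi$ on $[0,1]$: without it one cannot invert, and establishing it requires leveraging the real-analyticity of the Pick function $h$ to exclude $\psi$ from having a flat piece at the top. Everything else is a straightforward assembly of the congruence reduction, the commuting-pair functional calculus identity $A'\,\sigma\,B'=\psi(T)$, and the spectral bound $\|T\|<1$ secured by the gap $\gamma>\gamma_0$.
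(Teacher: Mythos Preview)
Your proof is correct and follows the same strategy as the paper: reduce to $X=I$ by congruence, parametrize the candidate pair as $(I+T,\,I-T)$ (the paper equivalently uses $(A,\,2I-A)$ with $A\le I$, i.e.\ the substitution $A=I-T$), and recover $T$ by inverting the scalar function $\psi(t)=(1+t)\,\sigma\,(1-t)$ via functional calculus. The only point of departure is the justification of strict monotonicity on $[0,1]$: the paper differentiates $\varphi(t)=t\,\sigma\,(2-t)=th(2t^{-1}-1)$ and shows $\varphi'(t)>0$ directly, using the concavity bound $h'(a)<(h(a)-1)/(a-1)$ for $a>1$ together with $h'(1)=1/2$ and $h(a)\le(a+1)/2$; you instead obtain it from the softer combination of evenness (symmetry of $\sigma$), concavity of $\psi$ (perspective of the concave $h$ restricted to a line), and real-analyticity of $h$ to rule out a flat piece at the maximum. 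Both arguments are valid; yours avoids differentiating $h$, while the paper's gives the pointwise information $\varphi'>0$.
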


\begin{proof}
Let $h$ be the operator monotone function on $[0,\infty)$ corresponding to $\sigma$, i.e.,
$h(x):=1\,\sigma\,x$ for $x\ge0$. Since $\gamma_0=2h(0)$, we have $0\le\gamma_0\le1$ by
\eqref{F-2.3}. Note that $h(0)=1/2$ implies $\sigma=\triangledown$ (see the last part of
the proof of Lemma \ref{L-2.2}). Hence we have $0\le\gamma_0<1$.

Note that $X\ge Y\ge\gamma X$ is equivalent to $I\ge X^{-1/2}YX^{-1/2}\ge\gamma I$. When
we have $A,B\in B(\cH)^{++}$ such that $I=A\,\triangledown\,B$ and
$X^{-1/2}YX^{-1/2}=A\,\sigma\,B$, it follows that
$X=(X^{1/2}AX^{1/2})\,\triangledown\,(X^{1/2}BX^{1/2})$ and
$Y=(X^{1/2}AX^{1/2})\,\sigma\,(X^{1/2}BX^{1/2})$. Thus we may assume that
$I\ge Y\ge\gamma I$ with $\gamma\in(\gamma_0,1]$ and find $A,B\in B(\cH)^{++}$ such that
$I=A\,\triangledown\,B$ and $Y=A\,\sigma\,B$. For this, it suffices to find an
$A\in B(\cH)^{++}$ such that $A\le I$ and $A\,\sigma\,(2I-A)=Y$. Define
$\ffi(t):=t\,\sigma\,(2-t)$ for $0\le t\le1$; then for $0<t\le1$ we have
$\ffi(t)=th(2t^{-1}-1)$ and so
$$
\ffi'(t)=h(2t^{-1}-1)-2t^{-1}h'(2t^{-1}-1).
$$
Letting $a:=2t^{-1}-1\in(1,\infty)$ for any $t\in(0,1)$, one can see that
$h'(a)<(h(a)-1)/(a-1)$. In fact, suppose on the contrary that $h'(a)\ge(h(a)-1)/(a-1)$;
then by concavity $h$ must be linear on $[1,a]$. Furthermore, $h'(1)=1/2$ since $\sigma$
is symmetric, that is, $h(x)=xh(x^{-1})$ for $x>0$. Hence it follows that $h(x)=(x+1)/2$
on $[1,a]$, implying $\sigma=\triangledown$. Therefore we have
$$
h'(a)<{h(a)-1\over a-1}\le{h(a)\over a+1}
$$
thanks to $h(a)\le(a+1)/2$. This yields that $\ffi'(t)=h(a)-(a+1)h'(a)>0$, so $\ffi$ is
strictly increasing on $[0,1]$. Since $\ffi(t)=(2-t)\,\sigma\,t$ by symmetry of
$\sigma$, $\ffi(0)=2\,\sigma\,0=\gamma_0$. Also $\ffi(1)=1$. Hence one can define
$A:=\ffi^{-1}(Y)$ so that $A\in B(\cH)^{++}$, $A\le I$, and $Y=\ffi(A)=A\,\sigma\,(2I-A)$.
\end{proof}

When $\gamma_0=0$, for every $X,Y\in B(\cH)^{++}$ with $X\ge Y$ we have
$A,B\in B(\cH)^{++}$ such that $X=A\,\triangledown\,B$ and $Y=A\,\sigma\,B$. For example,
when $\sigma=\,!$ and $\#$, $A$ and $B$ can be chosen, respectively, as follows:
$$
\begin{cases}A=X-X\,\#\,(X-Y), \\ B=X+X\,\#\,(X-Y),\end{cases}\quad
\begin{cases}A=X-X\,\#\,(X-YX^{-1}Y), \\ B=X+X\,\#\,(X-YX^{-1}Y).\end{cases}
$$

\bigskip
\noindent{\it Proof of Proposition \ref{P-4.1}.}\enspace
The necessity of (1) and (2) for $f$ to be operator monotone is obvious. Assume (1) and
let $X,Y\in B(\cH)^{++}$ with $X\ge Y$. Choose a $\gamma\in(\gamma_0,1)$, where
$\gamma_0\in[0,1)$ be as in Lemma \ref{L-4.2}, and define for $k=0,1,2,\dots$
$$
X_k:=\gamma^kX+(1-\gamma^k)Y.
$$
Then $X_0=X$, and we have $X_k\ge X_{k+1}\ge\gamma X_k$ for each $k\ge0$ because
$$
X_k-X_{k+1}=(\gamma^k-\gamma^{k+1})(X-Y)\ge0,\quad
X_{k+1}-\gamma X_k=(1-\gamma)Y\ge0.
$$
Hence by Lemma \ref{L-4.2}, (1) implies that
$$
f(X)\ge f(X_1)\ge\cdots\ge f(X_k)\ge\cdots,\qquad k\ge1.
$$
Since $X_k-Y=\gamma^k(X-Y)\to0$ so that $f(X_k)\to f(Y)$ in the operator norm, we have
$f(X)\ge f(Y)$.

In the same way it follows that $f$ is operator monotone decreasing if and only if the
reversed inequality of (1) holds. Moreover, conditions (1) and (2) are transformed into
each other when $f$ is replaced by $f(x^{-1})$ and $\sigma$ by the adjoint $\sigma^*$.
Hence the assertions for (2) are immediate from those for (1).\qed

\section*{Acknowledgments}

The authors are grateful to Dr.\ Mil\'an Mosonyi for discussions. He proposed the question
about the convexity of the functional $\log\omega(A^\alpha)$, that is the starting point
of this work. They thank Professor Rajendra Bhatia for calling their attention to the
paper \cite{ARV} after the first version of the manuscript was completed. The work of
F.H.\ was partially supported by Grant-in-Aid for Scientific Research (C)21540208.

\end{document}